\theoremstyle{plain}
\newtheorem{prop}{Proposition}[section]
\theoremstyle{definition}
\newtheorem{defi}[prop]{Definition}
\newtheorem{exam}[prop]{Example}
\newtheorem{lem}[prop]{Lemma}
\newcommand{\gr}[1]{{\color{gray} #1}}
\newcommand{\ie}{\emph{i.e.}}
\newcommand{\Cilk}{\texttt{Cilk}\xspace}
\newcommand{\CilkP}{\texttt{Cilk++}\xspace}
\newcommand{\CPP}{\texttt{C++}\xspace}
\renewcommand{\leq}{\leqslant}
\renewcommand{\geq}{\geqslant}
\newcommand{\MMX}{\texttt{MMX}\xspace}
\newcommand{\SIMD}{\texttt{SIMD}\xspace}
\newcommand{\SSE}{\texttt{SSE}\xspace}
\newcommand{\SSEV}{\texttt{SSE4.1}\xspace}
\newcommand{\NN}{\mathbb{N}}
\renewcommand{\tt}[1]{\texttt{#1}}
\newcommand{\sgnode}[1]{{\bf \left<#1\right>}}
\renewcommand{\ie}{\emph{i.e}}
\DeclareMathOperator{\Irr}{Irr}
\DeclareMathOperator{\card}{card}
\newcommand{\Delete}[1]{}
\title{Exploring the tree of numerical semigroups}
\author{Jean Fromentin and  Florent Hivert}
\date{April 10,  2014}
\keywords{semigroups, tree, algorithm, \texttt{SSE}, 
\texttt{Cilk}, optimization}
\subjclass[2010]{05A15,68R05,68W10}
\begin{document}

\maketitle

\begin{abstract}
  In this paper we describe an algorithm visiting all numerical semigroups
  up to a given genus using a well suited representation. The
  interest of this algorithm is that it fits particularly well the
  architecture of modern computers allowing very large optimizations: we obtain the number of numerical semigroups of genus $g\leq  67$ and we confirm the Wilf conjecture for $g\leq 60$.
  
\end{abstract}

\vspace{1em}

\section*{Introduction}

A \emph{numerical semigroup} $S$ is a subset of $\NN$ containing $0$, closed under addition and of finite complement in $\NN$.  
For example the set 
\begin{equation}
\label{E:NSG}
S_E=\{0,3,6,7,9,10\}\cup\{x\in\NN, x\geq 12\}
\end{equation}
is a numerical semigroup.
The \emph{genus} of a numerical semigroup $S$, denoted by~$g(S)$, is the cardinality of~$\NN\setminus S$.
 For example the genus of $S_E$ is $6$,  the cardinality of $\{1,2,4,5,8,11\}$.

For a given positive integer $g$, the number of numerical semigroups of genus $g$ is finite and is denoted by $n_g$. 
In  J.A. Sloane's \emph{on-line encyclopedia of integer sequences}~\cite{OEIS} we find the values of $n_g$ for $g\leq 52$. 
These values have been obtained by M. Bras-Amor\'os (\cite{BrasAmoros2008} for more details for $g\leq 50)$. 
On his home page~\cite{Delgado}, M. Delgado  gives the value of $n_{55}$.

M.~Bras-Amor\'os used a depth first search exploration of the tree of numerical semigroups $\mathcal{T}$ up to a given genus.
This tree was introduced by J.C.~Rosales and al. in \cite{Rosales} and it is the subject of Section~\ref{S:Tree}.
Starting with all the numerical semigroups of genus $49$ she obtained the number of numerical semigroups of genus $50$ in $18$ days on a pentium D runing at $3$GHz. 
In the package~\tt{NumericalSgs} \cite{NumericalSgps} of \tt{GAP} \cite{GAP}, M.~Delgado together with P.A.~Garcia-Sanchez and J. Morais used the same method of exploration.

Here we describe a new algorithm for the exploration of the tree of numerical
semigroups $\mathcal{T}$ and achieve the computation of $n_g$ for $g\leq 67$.
The cornerstone of our method is a combinatorial representation of
numerical semigroups that is well suited and allows large code optimization essentially based on the use of
vectorial instructions and parallelization.
The goal of the paper is twofold:
first to present our encoding of numerical semigroups and the associated
algorithms, and second to present the optimization techniques which allow, for
those kinds of algorithms, to get speedups by factors of hundreds and even
thousands. We claim that these techniques are fairly general for those kinds of
algorithms. As a support for the claim, we applied it to an algorithm of
N.~Borie enumerating integer vector modulo permutation groups~\cite{Borie} and
got a speedup by a factor larger than 2000 using 8 cores.

The paper is divided as follows.
In Section~\ref{S:Tree} we describe the tree of numerical semigroups and give bounds for some parameters attached to a numerical semigroup.
The description of our representation of numerical semigroups is done in the second section.
In Section~\ref{S:Algo} we describe an algorithm based on the representation given in Section~\ref{S:DecNumber} and give its complexity. 
Section~\ref{S:Opti} is more technical, and is devoted to the optimization of the algorithm introduced in Section~\ref{S:Algo}.
In the last section we emphasize the results obtained using our algorithm.

\section{The tree of numerical semigroups}
\label{S:Tree}

We start this section with definitions and properties of numerical semigroups that will be used in the sequel.
For a more complete introduction, the reader can usefully consult the book \emph{Numerical Semigroups} by J.C.~Rosales and P.A.~Garc\'ia-S\'anchez \cite{BookNS} or the book \emph{The Diophantine Frobenius Problem} by J.L.~Ram\'irez~Alfons\'in \cite{BookDFP}.

\begin{defi}
Let $S$ be a numerical semigroup. We define 

$i)$ $m(S)=\min(S\setminus\{0\})$, the \emph{multiplicity} of $S$;

$ii)$ $g(S)=\card(\NN\setminus S)$, the \emph{genus} of $S$;

$iii)$ $f(S)=\max(\mathbb{Z}\setminus S)$, the \emph{Frobenius} of $S$;

$iv)$ $c(S)=f(S)+1$, the \emph{conductor of $S$}. 
\end{defi}

By definition a numerical semigroup is an infinite object and we need a finite description of such an object. 
That is provided by generating sets.

\begin{defi}
A subset $X=\{x_1<x_2<...<x_n\}$ of a semigroup is a \emph{generating set} of $S$ if every element of $S$ can be expressed as a sum of elements in $X$. 
In this case we write $S=\left<x_1,...,x_n\right>$.
\end{defi}

If we reconsider the numerical semigroup of \eqref{E:NSG}, we obtain
\begin{equation}
\label{E:GNSG}
S_E=\{0,3,6,7,9,10\}\cup[12,+\infty[=\left<3,7\right>.
\end{equation}

A non-zero element $x$ of a numerical semigroup $S$ is said to be \emph{irreducible} if it cannot be expressed as a sum of two non-zero elements of $S$.  
We denote by $\Irr(S)$ the set of all irreducible elements of $S$.

\begin{lem}[Lemma 2.3 of~\cite{BookNS}]
For a numerical semigroup $S$, the set $\Irr(S)$ is the minimal generating set of $S$ relatively to the inclusion ordering.
\end{lem}

The different parameters we have defined on a numerical semigroup, satisfy the following relations.

\begin{prop}[Proposition~2.12 and Lemma~2.14 of~\cite{BookNS}]
\label{P:Res}
For every  numerical semigroup  $S$,  we have

$i)$  $x\in \Irr(S)$ implies $x\leq c(S)+m(S)-1$;

$ii)$ $m(S)\leq g(S)+1$;

$iii)$ $c(S)\leq 2 g(S)$.
\end{prop}

A consequence of Proposition~ \ref{P:Res}~$i)$ is that $\Irr(S)$ is finite and its cardinality is at most $c(S)+m(S)-1$.
Moreover, the cardinality of $\Irr(S)$ is at most $m(S)$ since any two distinct elements of $\Irr(S)$ cannot be congruent modulo $m(S)$. See Section~2 of Chapter I of~\cite{BookNS} for more details.

We now explain the construction of the tree of numerical semigroups.
Let~$S$ be a numerical semigroup. The set $S'=S\cup\{f(S)\}$ is also a numerical  semigroup and its genus is $g(S)-1$. 
As  each integer greater than $f(S)$ is included in $S'$ we have $c(S')\leq f(S)$. 
Therefore every semigroup $S$ of genus $g$ can be obtained from a semigroup $S'$ of genus $g-1$ by removing an element of~$S'$ greater than or equal to $c(S')$.

\begin{prop}[Proposition~7.28 of \cite{BookNS}]
\label{P:Sx}
Let $S$ be a numerical semigroup and $x$ an element of $S$. The set~$S^x=S\setminus\{x\}$ is a numerical semigroup if and only if $x$ is irreducible in $S$.
\end{prop}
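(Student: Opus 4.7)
The plan is to prove the equivalence directly in both directions, by checking the three defining conditions of a numerical semigroup ($0 \in S^x$, closure under addition, and finite complement in $\NN$) for the forward direction, and by exhibiting a failure of closure for the contrapositive of the other direction.

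For the ``if'' direction, I assume $x$ is irreducible, which in particular means $x \neq 0$, so $0 \in S^x$. The complement $\NN \setminus S^x = (\NN \setminus S) \cup \{x\}$ is still finite since it differs from $\NN \setminus S$ by one element. The only real point to verify is closure under addition: take $y, z \in S^x$; then $y + z \in S$ because $S$ is a numerical semigroup, so I only need $y + z \neq x$. If one of $y, z$ equals $0$, then $y + z$ equals the other, which lies in $S^x$ and hence is not $x$. Otherwise $y, z \in S \setminus \{0\}$, and $y + z = x$ would contradict irreducibility of $x$. Hence $y + z \in S^x$.

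For the ``only if'' direction, I argue by contrapositive: assume $x$ is not irreducible in $S$. If $x = 0$, then $0 \notin S^x$ so $S^x$ is not a numerical semigroup, and we are done. Otherwise $x \in S\setminus\{0\}$ decomposes as $x = y + z$ with $y, z \in S \setminus \{0\}$. Since $y, z \geq 1$, I have $y < x$ and $z < x$, so in particular $y \neq x$ and $z \neq x$; therefore $y, z \in S^x$. But $y + z = x \notin S^x$, so $S^x$ is not closed under addition and hence not a numerical semigroup.

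There is no real obstacle here; the crux is simply the observation that a nontrivial decomposition $x = y + z$ with $y, z \neq 0$ forces both summands to be strictly smaller than $x$, which is what places them in $S^x$ and breaks closure. Conversely, irreducibility blocks exactly such decompositions, which is precisely what is needed for $S^x$ to remain additively closed.
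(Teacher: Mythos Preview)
Your proof is correct and follows essentially the same approach as the paper's: contrapositive for one direction (a decomposition $x=y+z$ with $y,z\neq 0$ gives two elements of $S^x$ whose sum escapes $S^x$), and a direct closure check for the other. You are in fact slightly more careful than the paper, since you explicitly verify the finite-complement condition, handle the edge case $x=0$, and justify why the summands $y,z$ are different from $x$.
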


Proposition~\ref{P:Sx} implies that every semigroup $S$ of genus $g$ can be obtained from a semigroup $S$ by removing a generator $x$ of $S$ that is greater than or equal to $c(S)$.
Hence the relation~$S'=S^x$ holds.

We construct the tree of numerical semigroups, denoted by $\mathcal{T}$ as follows. 
The root of the tree is the unique semigroup of genus $0$, \ie, $\left<1\right>$ that is equal to $\NN$. 
If~$S$ is a semigroup in the tree,  the sons of $S$ are exactly the semigroups~$S^x$ where $x$ belongs to $\Irr(S)\cap[c(S),+\infty[$. 
By convention, when depicting the tree, the numerical semigroup $S^x$ is in the left of $S^y$ if $x$ is smaller than~$y$. 
With this construction, a semigroup $S$ has depth $g$ in $\mathcal{T}$ if and only if its genus is~$g$, see Figure~\ref{F:Tree}.
We denote by $\mathcal{T}_{g}$ the subtree of~$\mathcal{T}$ restricted to all semigroups of genus $\leq g$.

 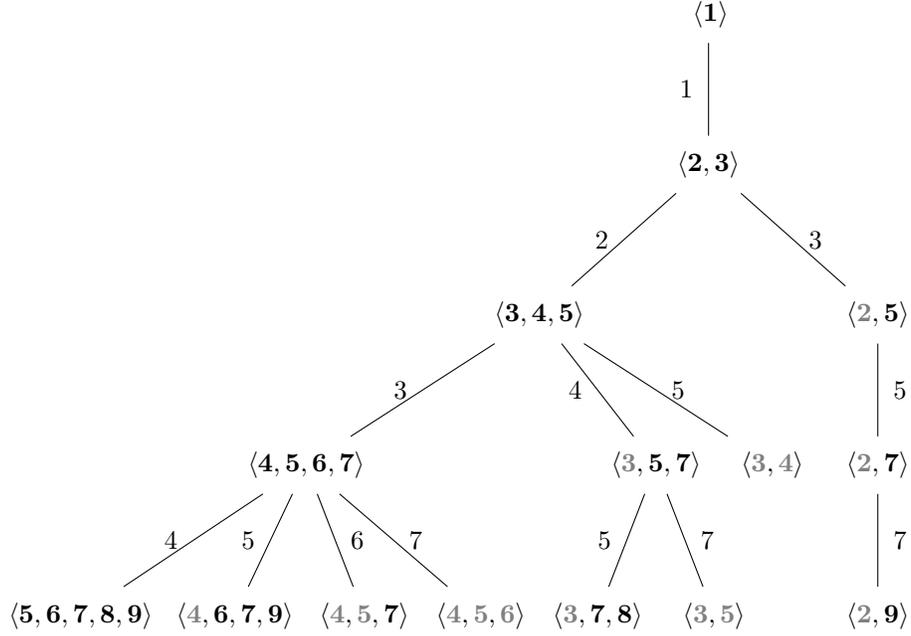
\begin{figure}[h!]
\[
\begin{tikzpicture}[level distance=2cm, inner sep=2mm,level/.style={sibling distance=1.55cm}]
    \node {$\sgnode{1}$}
    child {node {$\sgnode{2,3}$}
      child [sibling distance=4.5cm] {node {$\sgnode{3,4,5}$}
        child {node {$\sgnode{4,5,6,7}$}
          child [sibling distance=2cm]{node {$\sgnode{5,6,7,8,9}$}  edge from parent node [left] {4}}
          child [sibling distance=1.9cm]{node {$\sgnode{\gr 4,6,7,9}$}  edge from parent node [left] {5}}
          child {node {$\sgnode{\gr 4,\gr 5,7}$}  edge from parent node [right] {6}}
          child {node {$\sgnode{\gr 4,\gr 5,\gr 6}$}  edge from parent node [right] {7}}
          edge from parent node [left] {3}
        }
        child{{} edge from parent[draw=none]}
        child{{} edge from parent[draw=none]}
        child {node {$\sgnode{\gr 3,5,7}$}
          child {node {$\sgnode{\gr 3,7,8}$} edge from parent node [left] {5}}
          child {node {$\sgnode{\gr 3,\gr 5}$} edge from parent node [right] {7}}
          edge from parent node [left] {4}
        }
        child {node {$\sgnode{\gr 3,\gr 4}$}
          edge from parent node [right] {5}
        }
        edge from parent node [left] {2}
      }
      child [sibling distance=4.5cm] {node {$\sgnode{\gr2,5}$}
        child {node {$\sgnode{\gr2,7}$}
          child {node {$\sgnode{\gr2,9}$}
            edge from parent node [right] {7}
          }
          edge from parent node [right] {5}
        }
        edge from parent node [right] {3}
      }
      edge from parent node [left] {1}
    }
    ;
  \end{tikzpicture}
  \]
\caption{The first four layers of the tree $\mathcal{T}$ of numerical semigroups, corresponding to~$\mathcal{T}_4$. A generator of a semigroup is it in gray if is not greater than $c(S)$. An edge between a semigroup $S$ and its son $S'$ is labelled by  $x$ if $S'$ is obtained from $S$ by removing~$x$, that is if $S'=S^x$ holds.}
\label{F:Tree}
\end{figure}

\section{Decomposition number}
\label{S:DecNumber}

The aim of this section is to describe a representation of numerical semigroups,
which is well suited to an efficient exploration of the tree~$\mathcal{T}$ of numerical semigroups. 

\begin{defi}
Let $S$ be a numerical semigroup.
For every $x$ of $\NN$  we set
\[
D_S(x)=\{y \in S\ |\ x-y\in S\ \text{and}\ 2y\leq x\}
\]
and $d_S(x)=\card D_S(x)$.
We called $d_S(x)$ the $S$-\emph{decomposition number} of~$x$.
The application $d_S:\NN\to\NN$ is the $S$-\emph{decomposition numbers function}.
\end{defi}

Assume that $y$ is an element of $D_S(x)$.
By definition of $D_S(x)$, the integer~$z=x-y$ also belongs to $S$. 
Then $x$ can be decomposed as $x=y+z$ with $y$ and $z$ in~$S$. 
Moreover the condition $2y\leq x$ implies $y\leq z$. 
In other words if we define~$D'_S(x)$ to be the set of all $(y,z)\in S\times S$ with $x=y+z$ and $y\leq z$ then~$D_S(x)$ is the image of $D'_S(x)$ under the projection on the first coordinate.
 Hence~$D_S(x)$ describes how~$x$ can be decomposed as sums of two elements of $S$.
 This justifies the name given to the function $d_S$.
 
\begin{exam}
Reconsider the semigroup $S_E$ given at \eqref{E:NSG}.
The integer $14$ admits two decompositions as sums of two elements of $S$, namely $14=0+14$ and $14=7+7$. 
Thus the set $D_{S_E}(14)$ is equal to $\{0,7\}$ and $d_{S_E}=2$ holds.
\end{exam}

\begin{lem}
\label{L:RepN}
For every numerical semigroup $S$ and every integer $x\in \NN$,  we have $d_S(x)\leq 1+\left\lfloor \dfrac{x}2\right\rfloor$ and the equality holds for $S=\NN$.
\end{lem}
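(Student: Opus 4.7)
The plan is to observe that the inequality is essentially a counting statement about the range of the variable $y$ in the definition of $D_S(x)$. Any $y \in D_S(x)$ is a non-negative integer satisfying $2y \leq x$, so $y$ lies in the set $\{0, 1, \ldots, \lfloor x/2 \rfloor\}$. First I would make this explicit: since $D_S(x)$ is a subset of $S \cap [0, \lfloor x/2 \rfloor]$, which itself has at most $\lfloor x/2 \rfloor + 1$ elements, the bound
\[
d_S(x) = \card D_S(x) \leq \lfloor x/2 \rfloor + 1
\]
follows immediately.

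For the equality case, I would take $S = \NN$ and check directly that every $y \in \{0, 1, \ldots, \lfloor x/2 \rfloor\}$ actually belongs to $D_\NN(x)$. The condition $y \in \NN$ is trivial, the condition $2y \leq x$ holds by construction, and the condition $x - y \in \NN$ holds because $y \leq \lfloor x/2 \rfloor \leq x$ forces $x - y \geq 0$. Hence $D_\NN(x) = \{0, 1, \ldots, \lfloor x/2 \rfloor\}$ and $d_\NN(x) = \lfloor x/2 \rfloor + 1$, giving equality.

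There is no real obstacle here; the lemma is essentially a definitional unpacking, and the only thing worth being careful about is the floor function, which is forced by the fact that $y$ must be an integer satisfying $2y \leq x$ (equivalently $y \leq x/2$, equivalently $y \leq \lfloor x/2 \rfloor$ since $y \in \NN$).
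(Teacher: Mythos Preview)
Your proposal is correct and follows essentially the same approach as the paper: both observe that $D_S(x)\subseteq\{0,1,\ldots,\lfloor x/2\rfloor\}$ to get the bound, and that for $S=\NN$ this inclusion is an equality. Your write-up is simply a more detailed unpacking of the paper's two-sentence proof.
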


\begin{proof}
As the set $D_S(x)$ is included in $\left\{0,...,\left\lfloor\frac x2\right\rfloor\right\}$, the relation $d_S(x)\leq 1+\left\lfloor \frac{x}2\right\rfloor$ holds.
For $S=\NN$ we have the equality for the set~$D_S(x)$ and so for the integer $d_S(x)$.
\end{proof}

A straightforward consequence of the definition of $S$-decomposition numbers is :

\begin{prop}
\label{P:Dandd}
For a numerical semigroup $S$ and $x\in\NN\setminus\{0\}$, we have:

 $i)$ $x$ lies in $S$ if and only if $d_S(x)>0$.

 $ii)$ $x$ is in $\Irr(S)$ if and only if $d_S(x)=1$.
\end{prop}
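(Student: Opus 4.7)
The plan is to prove both equivalences by directly manipulating the definition of $D_S(x)$, using the fact that $0$ is almost always the ``trivial'' witness that sits inside $D_S(x)$.

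For part $(i)$, I would handle the two directions separately. In the forward direction, if $x\in S$, I would exhibit $y=0$ as a member of $D_S(x)$: indeed $0\in S$, $x-0=x\in S$, and $2\cdot 0\leq x$, so $d_S(x)\geq 1$. In the converse direction, if $d_S(x)>0$, pick any $y\in D_S(x)$; then by definition $y\in S$ and $x-y\in S$, and since $S$ is closed under addition, $x=y+(x-y)\in S$.

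For part $(ii)$, the key observation is that once $x\in S$, the element $0$ always lies in $D_S(x)$, so any further element of $D_S(x)$ corresponds to a nontrivial decomposition. More precisely, I would argue as follows. If $x\in\Irr(S)$, then $x\in S$ so $d_S(x)\geq 1$ by part $(i)$. If we had $d_S(x)\geq 2$, there would exist $y\in D_S(x)$ with $y\neq 0$; then $y\in S\setminus\{0\}$, and since $2y\leq x$ and $y\geq 1$ we get $x-y\geq y\geq 1$, so $x-y\in S\setminus\{0\}$; writing $x=y+(x-y)$ contradicts irreducibility. Hence $d_S(x)=1$. Conversely, suppose $d_S(x)=1$; by part $(i)$, $x\in S$, and $0\in D_S(x)$ forces $D_S(x)=\{0\}$. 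If $x$ were reducible, we could write $x=a+b$ with $a,b\in S\setminus\{0\}$ and (after swapping) $a\leq b$; then $2a\leq a+b=x$, so $a\in D_S(x)$ with $a\neq 0$, contradicting $D_S(x)=\{0\}$. Hence $x\in\Irr(S)$.

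I do not expect any real obstacle here: the argument is essentially an unwinding of the definitions of $D_S(x)$ and $\Irr(S)$, with the single conceptual point being that the ordering condition $2y\leq x$ ensures that the ``small half'' of every decomposition is counted exactly once. The only minor care required is keeping track of the case $y=0$ versus $y\neq 0$ in part $(ii)$, and noting that the hypothesis $x\neq 0$ guarantees $x-y\neq 0$ whenever $y\neq 0$ and $2y\leq x$.
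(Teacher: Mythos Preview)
Your proposal is correct and follows essentially the same argument as the paper: exhibit $0\in D_S(x)$ for the forward directions, use closure under addition for the converse of $(i)$, and for $(ii)$ observe that any nonzero element of $D_S(x)$ yields a nontrivial two-term decomposition of $x$ (and conversely). Your write-up is in fact slightly more careful than the paper's in checking that $x-y\neq 0$ when $y\neq 0$, but there is no substantive difference in strategy.
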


We note  that $0$ is never irreducible despite the fact $d_S(0)$ is $1$ for all numerical semigroups $S$.
We now explain how to compute the $S$-decomposition numbers function of a numerical semigroup  from that of its father.

\begin{prop}
Let $S$ be a numerical semigroup and $x$ be an irreducible element of $S$. 
Then for all~$y\in \NN\setminus\{0\}$ we have
\[
d_{S^x}(y)=\begin{cases}
d_S(y)-1&\text{if $y\geq x$ and $d_S(y-x)>0$,}\\
d_S(y) &\text{otherwise.}
\end{cases}
\] 
\end{prop}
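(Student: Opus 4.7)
The plan is to work directly from the set definitions and compute $D_{S^x}(y)\setminus D_S(y)$ and $D_S(y)\setminus D_{S^x}(y)$ element by element, then convert the resulting condition into the form stated in the lemma using Proposition~\ref{P:Dandd}.

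First I would observe that since $S^x\subseteq S$, we have $D_{S^x}(y)\subseteq D_S(y)$, so only one direction of the symmetric difference is nontrivial. Indeed, $z\in D_S(y)\setminus D_{S^x}(y)$ if and only if $z\in S$, $y-z\in S$, $2z\leq y$, and additionally one of $z$ or $y-z$ equals $x$ (this is the only way the membership in $S$ can fail when we pass to $S^x$). So the question reduces to understanding when $x$ or $y-x$ belongs to $D_S(y)$.

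Next I would handle the two candidates separately. The element $x$ itself lies in $D_S(y)$ exactly when $y-x\in S$ and $2x\leq y$ (recall $x\in S$ since $x\in\Irr(S)$). The element $y-x$ lies in $D_S(y)$ exactly when $y-x\in S$ and $2(y-x)\leq y$, i.e.\ $y\leq 2x$. A case split on $y$ vs.\ $2x$ then shows that under the common hypothesis $y\geq x$ and $y-x\in S$, exactly one of $z=x$ or $z=y-x$ contributes: if $y>2x$ only $x$ qualifies, if $y<2x$ only $y-x$ qualifies, and if $y=2x$ both coincide into the single element $x=y-x=y/2$. So in every such case $|D_S(y)\setminus D_{S^x}(y)|=1$, giving $d_{S^x}(y)=d_S(y)-1$. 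Conversely, if $y<x$ then $y-x\notin\NN$, so neither $z=x$ nor $z=y-x$ lies in $D_S(y)$; and if $y\geq x$ but $y-x\notin S$, then the condition $y-x\in S$ needed by both candidates fails. In both subcases $D_{S^x}(y)=D_S(y)$, yielding $d_{S^x}(y)=d_S(y)$.

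Finally, I would rewrite the condition ``$y-x\in S$'' in the form demanded by the statement. For $y>x$ this is exactly $d_S(y-x)>0$ by Proposition~\ref{P:Dandd}$(i)$. For $y=x$ one has $y-x=0$ and $d_S(0)=1>0$, and the formula still gives the right answer (namely $d_{S^x}(x)=0=d_S(x)-1$, since $d_S(x)=1$ by irreducibility of $x$ and Proposition~\ref{P:Dandd}$(ii)$, while $x\notin S^x$ so $d_{S^x}(x)=0$ by Proposition~\ref{P:Dandd}$(i)$). This last edge case — making sure the $y=x$ and $y=2x$ boundaries are consistent with the uniform statement — is really the only subtle point; everything else is bookkeeping on the defining conditions of $D_S(y)$.
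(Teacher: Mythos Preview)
Your proof is correct and follows essentially the same route as the paper's: both arguments note that $D_{S^x}(y)\subseteq D_S(y)$, identify the difference as consisting of at most the two candidates $z=x$ and $z=y-x$, and then split on the position of $y$ relative to $2x$ to see that exactly one element survives when $y-x\in S$. The only cosmetic difference is in the handling of $y=2x$: the paper arranges its two pieces $E$ and $F$ to be disjoint by using the strict inequality $2x<y$ for $E$, whereas you allow the two conditions $2x\leq y$ and $y\leq 2x$ to overlap and observe that the candidates coincide there.
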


\begin{proof}
A direct consequence of~$D_{S^x}(y)=D_S(y)\setminus\{y-x,x\}$.
\end{proof}

\section{A new algorithm}
\label{S:Algo}
We can easily explore the tree of numerical semigroups up to a genus~$G$ using
a depth first search algorithm (see Algorithm~\ref{A:ExpRec}). This approach does not seem to
have been used before.  In particular, M.~Bras-Amor\'os and M.~Delgado use
instead a breadth first search exploration.  The main advantage in our
approach is the small memory needs.  
Indeed, in the case of breadth first search algorithm one needs to compute and store the list $L_g$ of all numerical semigroups of genus $g$ before 
visiting the numerical semigroups of genus $g+1$, which is not required for a depth first search exploration.
In this paper we are interested in the exploration of the list~$L_g$ for $g\in \NN$, not in storing it.
This is the reason we use a depth first search algorithm for the exploration of the tree $\mathcal{T}_g$.
The only limitation is then the duration of the exploration and not the amount of available memory.
For example the list~$L_{54}$ needs several terabytes to be stored. 

\begin{algorithm}
\small
\caption{\small Recursive Depth first search exploration of the tree $\mathcal{T}_g$.}\label{A:ExpRec}
\begin{algorithmic}[1]
\Procedure{ExploreRec}{\tt{S}, \tt{G}}
	\If{$g(\tt{S})< \tt{G}$}
		\For{\tt{x} from $c(\tt{S})$ to $c(\tt{S})+m(\tt{S})$}
			\If{$\tt{x}\in \Irr(\tt{S})$}
				\State \Call{ExploreRec}{$\tt{S}^\tt{x}, \tt{G}$}
			\EndIf
		\EndFor
	\EndIf
\EndProcedure
\end{algorithmic}
\end{algorithm}

\noindent Equivalently, we can use an iterative version which uses a stack:

\begin{algorithm}
\caption{\small Iterative Depth first search exploration of the tree $\mathcal{T}_g$}\label{A:Exp}
\small
\begin{algorithmic}[1]
\Procedure{Explore}{\tt{G}}
\State Stack \tt{stack} \Comment{the empty stack}
\State \tt{stack.push($\mathbb{N}$)}
\While{\tt{stack} is not empty} 
	\State $\tt{S} \gets \tt{stack.top()}$
	\State \tt{stack.pop()}
	\If{$g(\tt{S})< \tt{G}$}
		\For{\tt{x} from $c(\tt{S})$ to $c(\tt{S})+m(\tt{S})$}
			\If{$\tt{x}\in \Irr(\tt{S})$}
				\State $\tt{stack.push}(\tt{S}^\tt{x})$
			\EndIf
		\EndFor
	\EndIf
\EndWhile
\EndProcedure
\end{algorithmic}
\end{algorithm}

In Algorithm~\ref{A:ExpRec} we do not specify how to compute $c(S)$, $g(S)$ and $m(S)$  from $S$ neither how to test if an integer is irreducible. 
It also misses  the characterization of $S^x$ from $S$.
These items depend heavily of the representation of $S$.
Our choice is to use the $S$-decomposition numbers function. 
The first task is to use a finite set of such numbers to characterize the whole semigroup.

\begin{prop}
\label{P:delta}
Let $G$ be an integer and $S$ be a numerical semigroup of genus $0<g\leq G$.
Then $S$ is entirely described by the vector $\delta_S=(d_S(0),...,d_S(3\,G)) \in \NN^{3G+1}$. More precisely we can obtain $c(S), g(S), m(S)$ and $\Irr(S)$ from $\delta_S$.
\end{prop}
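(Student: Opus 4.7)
The plan is to combine the three bounds \eqref{E:Irr}, \eqref{E:m}, \eqref{E:c} with Proposition~\ref{P:Dandd} to show every datum that the statement asks for already lives inside the window $[0, 3G]$, and then to read each invariant off from $\delta_S$. The crucial size estimate is for irreducibles: the argument given for \eqref{E:Irr} actually shows that $x \in \Irr(S)$ forces $x \leq c(S) + m(S) - 1$ (if $x \geq c(S)+m(S)$ then $x - m(S) \geq c(S)$, so $x = m(S) + (x - m(S))$ is a non-trivial decomposition). Combining this with \eqref{E:c} and \eqref{E:m} yields $x \leq 2g(S) + (g(S)+1) - 1 = 3g(S) \leq 3G$. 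From the same two bounds one also obtains $c(S) \leq 2G$ and $m(S) \leq G + 1$, and every gap of $S$ lies in $[1, c(S)-1] \subseteq [1, 2G-1]$.

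Using Proposition~\ref{P:Dandd}(i), the tuple $\delta_S$ determines $S \cap [0, 3G]$, because $y \in S$ iff $d_S(y) > 0$ (remembering $d_S(0) = 1$). From this one reads off
\[
g(S) = \card\{y \in [1,3G] : d_S(y) = 0\}, \qquad m(S) = \min\{y \geq 1 : d_S(y) > 0\},
\]
and $c(S) = 1 + \max\{y \in [0,3G] : d_S(y) = 0\}$, with the convention that the max over the empty set is $-1$ so that $c(\NN) = 0$; each of these is well defined because of the size estimates above. Proposition~\ref{P:Dandd}(ii) then identifies $\Irr(S) = \{x \in [1, 3G] : d_S(x) = 1\}$, where the bound $x \leq 3G$ guarantees that no irreducible is missed. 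Finally $S = (S \cap [0, c(S)-1]) \cup [c(S), +\infty)$ is determined in its entirety, because the first piece has been recovered and $c(S) \leq 2G$.

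The only step that is not a direct rewrite of Proposition~\ref{P:Dandd} is the bound $x \leq 3G$ for irreducibles; this is the main obstacle and explains precisely why the window has size $3G$ and not, say, $2G$. It requires combining all three of \eqref{E:Irr}, \eqref{E:m}, \eqref{E:c} and in particular using the slightly sharpened form $x \leq c(S) + m(S) - 1$ hidden inside the proof of \eqref{E:Irr}; once that bookkeeping is in hand, the remaining recoveries are immediate.
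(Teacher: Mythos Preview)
Your proof is correct and follows essentially the same route as the paper's: use the bounds \eqref{E:Irr}, \eqref{E:m}, \eqref{E:c} to ensure all relevant integers lie in $[0,3G]$, then invoke Proposition~\ref{P:Dandd} to read off $c(S)$, $g(S)$, $m(S)$ and $\Irr(S)$ from $\delta_S$. If anything you are slightly more careful than the paper, since you exclude $0$ in the formulas for $m(S)$ and $\Irr(S)$ (necessary because $d_S(0)=1$) and you handle the edge case $S=\NN$ explicitly.
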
 

\begin{proof}
By Proposition~\ref{P:Res} $iii)$ we have  the relation $c(S)\leq 2g(S)$ and so the $S$-decomposition number of~$c(S)$ occurs in $\delta_S$.
Proposition~\ref{P:delta} implies 
\[
c(S)=1+\max\{i\in\{0,...,3G\},\ d_S(i)=0\}.
\]
As all elements of $\NN\setminus S$ are smaller than $c(S)$, their $S$-decomposition numbers are in $\delta_S$ and we obtain
\[
g(S)=\card\{i\in\{0,...,3G\},\ d_S(i)=0\}.
\]
By Proposition~\ref{P:Res} $ii)$, the relation $m(S)\leq g(S)+1$ holds. 
This implies that the $S$-decomposition number of $m(S)$ appears in $\delta_S$ : 
\[
m(S)=\min\{i\in\{0,...,3G\},\ d_S(i)>0\}.
\]
By Proposition~\ref{P:Res},  all irreducible elements are smaller than $c(S)+m(S)-1$, which is itself smaller than~$3G$.
Hence, Proposition~\ref{P:Dandd} gives
\[
\Irr(S)=\{i \in\{0,...,3G\},\ d_S(i)=1\}.\qedhere
\]
\end{proof}

The previous representation of numerical semigroup (in terms of the vector $\delta_S$)  is similar but a little different from this used in \cite{Maria} and by people concerned with coding theory.

Even though it is quite simple, the computation of $c(S),m(S)$ and $g(S)$ from $\delta_S$ has a non negligible cost.
We represent a numerical semigroup $S$ of genus $g\leq G$ by $(c(S),g(S),c(S),\delta_S)$. 
In an algorithmic context, if the variable \texttt{S} stands for a numerical semigroups  we use:

-- \tt{S.c}, \tt{S.g} and \tt{S.m} for the integers $c(S)$, $g(S)$ and $m(S)$;

-- \tt{S.d[i]} for the integer $d_S(i)$.

For example the following Algorithm initializes a representation of the semigroup $\NN$ ready for an exploration of the tree $\mathcal{T}_G$ (the tree of numerical semigroup of genus at most $G$.)

\begin{algorithm}
\caption{\small Returns the root of $\mathcal{T}_G$}\label{A:Root}
\begin{algorithmic}
\small
\Function{Root}{\tt{G}}
\State \tt{R.c} $\gets$ 0 \Comment{\tt{R} stands for $\NN$}
\State \tt{R.g} $\gets$ 0
\State \tt{R.m} $\gets$ 1 
\For{\tt{x} from $0$ to $3\,\tt{G}$}
	\State\tt{R.d[x]}$\gets1+\left\lfloor\frac{\tt{x}}2\right\rfloor$
\EndFor
\State{\textbf{return} \tt{R}}
\EndFunction
\end{algorithmic}
\end{algorithm}

We can now describe an algorithm that returns the representation of the semigroup $S^x$ from that of the semigroup $S$ where $x$ is an irreducible element of $S$ greater than $c(S)$.

\begin{algorithm}
\caption{\small Returns the son $S^x$ of $S$ with $x\in\Irr(S)\cap[c(S),c(S)+m(S)[$.}\label{A:Son}
\begin{algorithmic}[1]
\small
\Function{Son}{\tt{S},\tt{x},\tt{G}}
\State \tt{$\tt{S}^\tt{x}$.c} $\gets$ $\tt{x}+1$
\State \tt{$\tt{S}^\tt{x}$.g} $\gets$ $\tt{S.g}+1$
\If{$\tt{x}>\tt{S.m}$}
	\State \tt{$\tt{S}^\tt{x}$.m} $\gets$ \tt{S.m}
\Else
	\State \tt{$\tt{S}^\tt{x}$.m} $\gets$ $\tt{S.m}+1$
\EndIf
\State\tt{$\tt{S}^\tt{x}$.d} $\gets$ \tt{S.d}\Comment{copy all the decomposition numbers}
\For{\tt{y} from \tt{x} to $3\,\tt{G}$}
	\If{$\tt{S.d}[\tt{y}-\tt{x}]>0$}
		\State $\tt{$\tt{S}^\tt{x}$.d}[\tt{y}]\gets\tt{$\tt{S}$.d}[\tt{y}]-1$ \Comment{decrease the decomposition number by $1$}
	\EndIf
\EndFor
\State{\textbf{return} $\tt{S}^\tt{x}$}
\EndFunction
\end{algorithmic}
\end{algorithm}

\begin{prop}
\label{P:Son}
Running on $(S,x,G)$ with $g(S)\leq G$, $x\in \Irr(S)$ and~$x\geq c(S)$, Algorithm~\ref{A:Son} returns the semigroup $S^x$ in time $O(\log(G)\times G)$.
\end{prop}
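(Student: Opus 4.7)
The plan is to verify that each component of the returned structure $T$ encodes the correct invariant of $S^x$, and then to estimate the elementary-operation count. Throughout I interpret the hypothesis on $x$ as $x\in \Irr(S)\cap[c(S),c(S)+m(S)[$, matching the range used both in Algorithm~\ref{A:Son} and in Algorithm~\ref{A:Exp}.

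I would first handle the conductor, genus, and multiplicity fields. Since $x$ is irreducible, $x\in S$, so $S^x=S\setminus\{x\}$ and $g(S^x)=g(S)+1$, which justifies $\tt{T.g}$. Because $x\geq c(S)$ and every integer in $[c(S),+\infty[$ lies in $S$, after removing $x$ the integers strictly greater than $x$ all remain in $S^x$, so $x$ becomes the largest gap of $S^x$, yielding $c(S^x)=x+1=\tt{T.c}$. For the multiplicity, note that $x\geq c(S)\geq m(S)$ rules out $x<m(S)$; if $x>m(S)$ then $m(S)$ is still in $S^x$, so $m(S^x)=m(S)$, while if $x=m(S)$ then $x\geq c(S)$ gives $[m(S)+1,+\infty[\subseteq S$, so $m(S)+1$ is the smallest nonzero element of $S^x$ and $m(S^x)=m(S)+1$. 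This justifies the two branches for the multiplicity field.

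For the decomposition numbers, I would invoke the preceding proposition, which asserts that for every $y\in\NN\setminus\{0\}$,
\[
d_{S^x}(y)=
\begin{cases}
d_S(y)-1 & \text{if } y\geq x \text{ and } d_S(y-x)>0,\\
d_S(y) & \text{otherwise}.
\end{cases}
\]
The algorithm copies $\tt{T.d}\gets\tt{S.d}$, so the entries $\tt{T.d}[y]$ with $y<x$ already satisfy the formula, and the loop over $y\in[x,3G]$ decrements $\tt{T.d}[y]$ exactly when $\tt{S.d}[y-x]>0$, matching the other branch. By Proposition~\ref{P:delta}, the resulting array of size $3G+1$ characterises $S^x$ completely.

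For the complexity, Lemma~\ref{L:RepN} bounds each value $d_S(y)$ with $y\leq 3G$ by $1+\lfloor 3G/2\rfloor$, so each entry fits in $O(\log G)$ bits. The algorithm performs $3G+1$ assignments for the copy and at most one comparison and one decrement per $y\in[x,3G]$, hence $O(G)$ elementary integer operations, each of bit-cost $O(\log G)$, for a total of $O(G\log G)$. I expect the subtlest point of the argument to be the multiplicity update: one must separately rule out $x<m(S)$ and verify that $m(S)+1\in S^x$ when $x=m(S)$, both of which rely on the hypothesis $x\geq c(S)$ together with the definition of the conductor.
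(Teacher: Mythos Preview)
Your proof is correct and follows essentially the same route as the paper: verify the conductor, genus, and multiplicity assignments directly, invoke the recursion for $d_{S^x}$ established just before this proposition to justify the loop, and then count $O(G)$ elementary operations of bit-cost $O(\log G)$. You are in fact slightly more careful than the paper in two places: you explicitly account for the $O(G)$ cost of the array copy $\tt{T.d}\gets\tt{S.d}$, and you invoke Lemma~\ref{L:RepN} to bound the bit-size of each entry, whereas the paper's complexity paragraph leaves both of these implicit.
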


\begin{proof}
Let us check the correctness of the algorithm.
By construction $S^x$ is the semigroup~$S\setminus\{x\}$.
Thus the genus of $S^x$ is $g(S)+1$, see Line~\tt{3}.
Every integer of $I=[x+1,+\infty[$ lies in $S$ since $x$ is greater than~$c(S)$, so the interval $I$ is included in $S^x$.
As $x$ does not belong to $S^x$, the conductor of $S^x$ is $x+1$, see Line~\tt{2}.
For the multiplicity of $S^x$ we have two cases. 
First, if $x>m(S)$ holds  then $m(S)$ is also in $S^x$ and so $m(S^x)$ is equal to $m(S)$.
Assume now $x=m(S)$. 
The relation $x(S)\geq c(S)$ and the characterization of~$m(S)$ implies $x=m(S)=c(S)$.
Thus $S^x$ contains $m(S)+1$ which is $m(S^x)$.
The initialization of~$m(S^x)$ is done by Lines \tt{4} to \tt{8}. 
The correctness of the computation of $\delta_{S^x}$ (see Proposition~\ref{P:delta} for the definition of $\delta_{S^x}$) done from Line \tt{9} to Line \tt{15} is a direct consequence of~Proposition~\ref{P:Dandd}. 

Let us now prove the complexity statement. 
Since by relations $ii)$ and $iii)$ of Proposition~\ref{P:Res} we have $x\leq 3G$ together with $m(S)\leq G+1$, each line from \tt{2} to \tt{8} is done in time $O(\log(G))$.
The \textbf{for} loop needs $O(G)$ steps and each step is done in time $O(\log(G))$. 
Summarizing, these results give that the algorithm runs in time $O(\log(G)\times G)$.
\end{proof}

\begin{algorithm}[hb!]
\caption{\small Returns an array containing the value of $n_g$ for $g\leq G$}\label{A:Count}
\begin{algorithmic}[1]
\small
\Function{Count}{\tt{G}}
\State \tt{n} $\gets [0,...,0]$ \Comment $\tt{n[g]}$ stands for $n_g$ and is initialized to $0$
\State Stack \tt{stack} \Comment{the empty stack}
\State \tt{stack.push(\textsc{Root}(\tt{G}))}
\While{\tt{stack} is not empty} 
	\State $\tt{S} \gets \tt{stack.top()}$
	\State \tt{stack.pop()}
	\State $\tt{n[S.g]}\gets\tt{n[S.g]}+1$
	\If{$\tt{S.g}< \tt{G}$}
		\For{\tt{x} from $\tt{S.c}$ to $\tt{S.c}+\tt{S.m}$}
			\If{$\tt{S.d[x]\,=\,1}$}			
			      \State 
$\tt{stack.push}(\textsc{Son}(\tt{S},\tt{x},\tt{G}))$
			\EndIf
		\EndFor
	\EndIf
\EndWhile
\State{\textbf{return} \tt{n}}
\EndFunction
\end{algorithmic}
\end{algorithm}

\begin{prop}
Running on $G\in\NN$, Algorithm~\ref{A:Count} returns the values of $n_g$ for $g\leq G$  in time
\[
O\left(\log(G)\times G \times \sum_{g=0}^G n_g\right)
\]
and its space complexity is $O(\log(G)\times G^3)$.
\end{prop}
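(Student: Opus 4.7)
My plan is to address correctness and then the two complexity bounds separately; nothing deep is needed beyond the results already established, and the only step that requires care is the stack-occupancy bound for the space complexity.

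For correctness I would first observe that Algorithm~\textsc{Count} is nothing but the generic depth-first traversal of the tree $\mathcal{T}^{\leq G}$: it pops a node, records its genus, and (if the depth is still less than $G$) pushes each of its sons. By Proposition~\ref{P:delta}, the fields \tt{S.c}, \tt{S.g}, \tt{S.m} and the test \tt{S.d[x]\,=\,1} correctly read off the conductor, genus, multiplicity and irreducibility of $S$, so the loop on Lines~10--12 enumerates exactly $\Irr(S)\cap[c(S),c(S)+m(S)[$; by Proposition~\ref{P:Son} each call to \textsc{Son} yields the correct representation of $S^x$ and keeps the invariant $g(S)\leq G$. Each semigroup of $\mathcal{T}^{\leq G}$ is therefore popped exactly once, and \tt{n[g]} ends up equal to~$n_g$ at termination.

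For the time bound, let $N=\sum_{g=0}^G n_g$ be the total number of nodes of $\mathcal{T}^{\leq G}$. At a visited node $S$ with $g(S)<G$, the \textbf{for} loop runs at most $m(S)+1\leq G+2$ times by~\eqref{E:m}, and each iteration performs one comparison on an entry of $\delta_S$ which is bounded by $1+\lfloor 3G/2\rfloor$ thanks to Lemma~\ref{L:RepN}, hence of bit-size $O(\log G)$. So the loop contributes $O(G\log G)$ per visited node, and $O(G\log G\cdot N)$ in total. The remaining work is the calls to \textsc{Son}: each non-root node of $\mathcal{T}^{\leq G}$ is produced by exactly one such call, giving $N-1$ invocations, each costing $O(G\log G)$ by Proposition~\ref{P:Son}; this again totals $O(G\log G\cdot N)$. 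Adding the two contributions yields the announced $O(\log(G)\times G\times \sum_{g=0}^G n_g)$.

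For space, the dominant term is the stack. Because Algorithm~\textsc{Count} pushes all children of the current node before descending, at any moment the stack holds, for every ancestor of the semigroup currently being processed, its still-unprocessed siblings. Since $\mathcal{T}^{\leq G}$ has depth at most $G$ and the branching factor at each internal node is at most $m(S)+1=O(G)$, the stack simultaneously contains at most $O(G^2)$ semigroups. One semigroup is stored as the tuple $(c,g,m,\delta)$ with $\delta$ an array of $3G+1$ entries, each of bit-size $O(\log G)$ by Lemma~\ref{L:RepN}, i.e.\ $O(G\log G)$ bits per semigroup. Multiplying the two estimates gives the claimed $O(\log(G)\times G^3)$ space bound. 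The only non-routine step in this whole argument is precisely this $O(G^2)$ bound on simultaneous stack occupancy; everything else is bookkeeping on top of Propositions~\ref{P:delta} and~\ref{P:Son}.
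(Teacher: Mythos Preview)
Your argument is correct and follows essentially the same route as the paper's proof: correctness via the depth-first traversal and Propositions~\ref{P:delta}--\ref{P:Son}, time via counting one $O(G\log G)$ unit of work per node of $\mathcal{T}^{\leq G}$, and space via an $O(G^2)$ bound on the stack occupancy times $O(G\log G)$ per stored semigroup. The only minor difference is that the paper bounds the stack level by level (at most $g{+}1$ semigroups of genus $g{+}1$, summing to $G(G{+}1)/2$), whereas you use the coarser depth $\times$ branching-factor estimate; both give the same $O(G^2)$.
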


\begin{proof}
The correctness of the algorithm is a consequence of Proposition~\ref{P:Son} and of the description of the tree~$\mathcal{T}$ of numerical semigroups.

For the time complexity, let us remark that Algorithm~\textsc{Son} is called for every semigroup of the tree~$\mathcal{T}_G$ (the tree of semigroups of genus $\leq G$). 
Since there are exactly $N=\sum_{g=0}^G n_g$ such semigroups, the time complexity of \textsc{Son} established in Proposition~\ref{P:Son} guarantees that the running time of \textsc{Count} is in~$O(\log(G)\times G\times N)$, as stated.

Let us now prove the space complexity statement.
For  this we need to describe the stack through  the run of the algorithm.
Since the stack is filled  with a depth first search algorithm, it has two properties.
The first one is that reading the stack from the bottom to the top, the genus increases.  
The second one is that, for all $g\in[0,G]$, every semigroup of genus~$g$ in the stack has the same father.
As the number of sons of a  semigroup $S$ is the number of $S$-irreducible elements in the set $\{c(S),...,c(S)+m(S)-1\}$,  a semigroup $S$ has at most $m(S)$ sons. 
By~Proposition~\ref{P:Res} $ii)$, this implies that a semigroup of genus $g$ has at most $g+1$ sons.
Therefore the stack contains at most $g+1$ semigroups of genus $g+1$ for $g\leq G$. 
So the size of the stack is bounded~by 
\[
M=\sum_{g=0}^Gg=\frac{G(G+1)}2.
\]
A semigroup $S$ is represented by a quadruple $(c(S),g(S),m(S),\delta_S)$. 
By relations $ii)$ and $iii)$ of Proposition~\ref{P:Res}, we have $c\leq 2g(S)$ and $m\leq g(S)+1$. 
As $g(S)\leq G$ holds, the integers~$c$, $g$  and $m$ of the representation of $S$ require a memory space in $O(\log(G))$. 
The size of~$\delta_S=(d_S(0),...,d_S(3G))$ is exactly~$3G+1$. 
Each entry of $\delta_S$ is the $S$-decomposition number of an integer smaller than $3G$ and hence
requires $O(\log(G))$ bytes of memory space.
Therefore the space complexity of $\delta_S$ is in~$O(\log(G)\times G)$, which implies that the space complexity of the \textsc{Count} algorithm is
\[
O(\log(G)\times G\times M)= O(\log(G)\times G^3).\qedhere
\]
\end{proof}

\section{Technical optimizations and results}

\label{S:Opti}

Even though there are asymptotically faster algorithms than the one presented here,
thank to careful optimizations, we were able to compute $n_g$ for much larger
genuses than before. This is due to the fact that our algorithm is particularly
well suited for the current processor architecture. In particular, it allows
to use parallelism at various scales (parallel branch exploration,
vectorization)...

To get the greatest speed from modern processors, we used several optimization
tricks, which we will elaborate in the following section:

\begin{itemize}

\item Vectorization (\MMX, \SSE instructions sets) and careful memory alignment; 
\item Shared memory multi-core computing using \CilkP for low level
  enumerating tree branching;
\item Partially derecursived algorithm using a stack;
\item Avoiding all dynamic allocation during the computation: everything is
  computed ``in place'';
\item Avoiding all unnecessary copy (the cost of the \textsc{Son} algorithm is
  roughly the same as copying);
\item Aggressive loop unrolling: the main loop is unrolled by hand using some
  kind of Duff's device;
\item Careful choice of data type (\verb|uint_fast8_t| for decomposition
  number, vs \verb|uint_fast64_t| for all indexes).
\end{itemize}

The source code of our algorithm is available in \cite{code}.
\subsection{Vectorization}

Assume for example that we want to construct the tree~$\mathcal{T}_{100}$
of all numerical semigroups of genus smaller than $100$.  In this case, the
representation of numerical semigroups given in Section~\ref{S:DecNumber} uses
decomposition numbers of integers smaller than~$300$.  By Lemma~\ref{L:RepN},
such a decomposition number is smaller than $151$ and requires~$1$ byte of
memory.  Thus at each \textbf{for} step of Algorithm~\textsc{Son}, the CPU
actually works on $1$ byte. However current CPUs usually work on $8$ bytes and
even on $16$~bytes using vector extensions. 
The first optimization uses this point. 

To go further we must specify that the array of decomposition numbers in the
representation of a semigroup corresponds to consecutive bytes in memory. In
the \textbf{for} loop of Algorithm~\textsc{Son} we may imagine two cursors:
the first one, denoted \texttt{src} pointing to the memory byte of
$\texttt{S.d[0]}$ and the second one, denoted \texttt{dst} pointing to the
memory byte $\texttt{T.d[y]}$.  Using these two cursors, Lines \tt{10} to
\tt{14} of Algorithm~\ref{A:Son} can be rewritten as follows:

\vspace{1em}
\small
\begin{algorithmic}
\State $\tt{src}\gets \text{address}(\tt{S.d[0]})$
\State $\tt{dst}\gets \text{address}(\tt{T.d[x]})$
\State $\tt{i}\gets 0$
\While{$\tt{i}\leq \tt{3G}-\tt{x}$}
\If{$\text{content}(\tt{src})>0$}
\State $\textbf{decrease}\ \text{content}(\tt{dst})$ \textbf{by} $1$\EndIf
\State \textbf{increase} \tt{src},\tt{dst},\tt{i} \textbf{by} $1$
\EndWhile
\end{algorithmic}
\vspace{1em}

\noindent In this version we can see that the cursors \texttt{src} and
\texttt{dst} move at the same time and that the modification of the value
pointed by \texttt{dst} only needs to access the values pointed by \tt{src}
and \tt{dst}.  We can therefore work in multiple entries at the same time
without collision.  Current CPUs allow this thanks to the \SIMD technologies as
\MMX, \SSE, etc.  The acronym \SIMD~\cite{WikipediaSIMD} stands for Single Operation Multiple Data.  
We used \SSEV~\cite{WikipediaSSE, IntelSSE} technology as it allows for the largest speedup\footnote{A much greater  speedup can be certainly obtained using \texttt{AVX2} technology \cite{AVX2}. However, at this time, we cannot access a performant computer with this set of instructions.}.  
This need to respect some constraints in the memory
organization of the data, namely ``memory alignment''. Recall that an address
is 16~bytes aligned if it is a multiple of 16. \SSE memory access are much
faster for aligned memory.

The computation of the children is then performed as follows. First, the
parent's decomposition numbers are copied in the children's using the
following \CPP code
{
\small
\begin{alltt}
void copy_blocks(dec_blocks &dst, dec_blocks const &src) {
  for (ind_t i=0; i<NBLOCKS; i++) dst[i] = src[i];
}
\end{alltt}}

Here \tt{dec\_blocks} is a type for arrays of 16~bytes blocks whose size
\tt{NBLOCKS} are just large enough to store the decomposition numbers (that is
$3G$ rounded up to a multiple of 16). The instruction \tt{dst[i] = src[i]}
actually copies a full 16~bytes block.

Then the core of the \textbf{while} loop in the preceding algorithm is
translated as a \texttt{for} loop as follows (recall that $x$ denotes the generator of
the father which is to be removed in the children):

{\small
\begin{alltt}
start = x >> 4;      // index of the block containing x
shift = x & 0xF;     // offset of x inside the block
...                  // some specific instructions to handle the first incomplete block.
for (long int i=start+1; i<NBLOCKS; i++) {
  block = load_unaligned_epi8(src + ((i-start)<<4) - shift);
  dst[i] -= ((block != zero) & one);
}
\end{alltt}
}

The instruction \tt{load\_unaligned\_epi8} (specific to \SSE technology) loads
 16 consecutive entries of the decomposition number
of the parent (called \tt{src} semigroup) in the variable \tt{block}.
Those entries will be used to compute
the entries $16i, \dots 16i+15$ of the children semigroups. Since the removed
generator~$x$ is not necessarily a multiple of $16$, the data are not aligned
in memory, hence the use of a specific instruction. The \tt{zero}
(resp. \tt{one}) constants are initialized as 16 bytes equal to $0$
(resp. $1$). The comparison \tt{(block != zero)} therefore returns a block
which contains $0$ in the bytes corresponding the $0$ entries of block and
$255$ in the non zero one. This result is then bitwise and-ed with one so that
the instruction actually performs a 16~bytes parallel version of

\begin{center}
$\tt{dst} \gets \tt{dst} - \textbf{ if } \tt{block} \neq 0 \textbf{ then } 1 \textbf{ else } 0$
\end{center}

\noindent which is equivalent to Lines \tt{10} to \tt{14} of Algorithm~\ref{A:Son}.
\medskip

As we previously said, to gain more speed this core loop is actually unrolled
using some kind of Duff device~\cite{WikipediaDuff}. 

\subsection{Parallel tree exploration using \CilkP}

Our second optimization is to use parallelism on exploration of the tree.
Today, CPUs of personal computers have several cores (2, 4 or more).  The given
version of our exploration algorithm uses a single core and so a fraction only
of the power of a CPU. The idea here is that different branches of the tree
can be explored in parallel by different cores of the computer. The tricky
part is to ensure that all cores are busy, giving a new branch when a core is
done with a former one. Fortunately there is a technology called
\CilkP~\cite{CilkIntel, CilkRefman} which is particularly well suited for
those kinds of problems. For our computation, we used the free version which is
integrated in the latest version of the GNU~C compiler~\cite{GCCcilk}.

\Cilk is a general-purpose language designed for multithreaded parallel
computing. The \CPP incarnation is called \CilkP. The biggest principle behind
the design of the \Cilk language is that the programmer should be responsible
for \emph{exposing} the parallelism, identifying elements that can safely be
executed in parallel; it should then be left to the run-time environment,
particularly the scheduler, to decide during execution how to actually divide
the work between cores.

The crucial thing is that two keywords are all that are needed to start
using the parallel features of \CilkP:
\begin{itemize}
\item\tt{cilk\_spawn}: used on a procedure call, indicates that the call can
  safely operate in parallel with the remaining code of the current
  function. Note that the scheduler is not obliged to run this procedure in
  parallel; the keyword merely alerts the scheduler that it can do so.
\item\tt{cilk\_sync}: indicates that execution of the current procedure cannot
  proceed until all previously spawned procedures have completed and returned
  their results to the current frame.
\end{itemize}
As a consequence, to get a parallel version of the recursive
Algorithm~\ref{A:ExpRec}, one only needs to modify it as

\vspace{1em}
{\small
\begin{algorithm}
\caption{\small \Cilk version of Algorithm~\ref{A:ExpRec}}
\begin{algorithmic}
\Procedure{ExploreRec}{\tt{S}, \tt{G}}
	\If{$g(\tt{S})< \tt{G}$}
		\For{\tt{x} from $c(\tt{S})$ to $c(\tt{S})+m(\tt{S})$}
			\If{$\tt{x}\in \Irr(\tt{S})$}
				\State\textbf{cilk\_spawn} 
\Call{ExploreRec}{$\tt{S}^\tt{x}, \tt{G}$}
			\EndIf
		\EndFor
	\EndIf
\EndProcedure
\end{algorithmic}

\end{algorithm}
}
\vspace{1em}

We just tell \CilkP that the subtrees rooted at various children can be
explored in parallel.
Things are actually only a little bit more complicated. First we have to
gather the results of the exploration. If we simply write
\begin{center}
\tt{result}[$g(\tt{S})$] $\gets \tt{result}[g(\tt{S})] + 1$
\end{center}
then we face the problem of two cores incrementing the same variable at the
same time.
 Incrementing a variable is actually done in 3 steps: reading the
value from the memory, adding one, storing back the result. 
Since there is by
default non synchronization, the following sequence of actions for two cores
 is possible: Read1 / Read2 / Add1 / Add2 / Store1 / Store2. Then the two
cores perform the same modification resulting in incrementing the variable
only once. This is called a data race and leads to nondeterministic wrong
results. To cope with those kinds of synchronization problems, \CilkP provides the
notion of \emph{reducer} which are variables local to each thread which are
gathered (here added) when a thread is finishing its job.

A more important problem is that the cost of a recursive call in non
negligible. Using \CilkP recursive calls instead of \CPP calls makes
it even worse. The solution we use is to switch back to the non recursive
version using a stack when the genus is close to the target genus. This leads
to the following \CilkP code:
\vspace{1em}

{\small
\begin{verbatim}
void explore(const Semigroup &S) {
  unsigned long int nbr = 0;
  if (S.g < MAX_GENUS - STACK_BOUND) {
    auto it = generator_iter<CHILDREN>(S); 
    while (it.move_next()) { //iterate along the children of S
      auto child = remove_generator(S, it.get_gen()).
      cilk_spawn explore(child);
      nbr++;
    }
    cilk_results[S.g] += nbr;
  }
  else explore_stack(S, cilk_results.get_array());
}
\end{verbatim}
}
\vspace{1em}

Note that in our version, we found that the \tt{STACK\_BOUND} optimal value was
around 10 to 12 for genus in the range~$45\dots 67$ so that \tt{explore\_stack}
is used more than $99\%$ of the time. The \CilkP recursive function does
actually very little work but ensures that the work is balanced between the
different cores.

\subsection{Various technical optimizations}

Using vectorization and loop unrolling as described previously leads to an
extremely fast \textsc{Son} algorithm. Indeed, its cost is comparable to the cost
of copying a semigroup. It is therefore crucial for performance to avoid any
extra cost. We list here various places where unnecessary cost can be avoided.

\subsubsection*{Avoiding all unnecessary copy} We also used a trick to avoid
copying from and to the top of the stack. Indeed, the main loop performs the
following sequences of operations:

\vspace{1em}

{\small
\begin{algorithmic}
  \State $\tt{S} \gets \tt{stack.top()}$
  \State \tt{stack.pop()}
  \For{all children $\tt{S}^\tt{x}$ of \tt{S}}
  \State $\tt{S.push}(\tt{S}^\tt{x})$
  \EndFor
\end{algorithmic}
}

\vspace{0.5em}

If we use a stack of semigroup, we can construct $\tt{S}^\tt{x}$ directly into
the stack memory but we have to copy the top of the stack to \tt{S}. 
In \cite{Zhai}, A.~Zhai establishes that the limit of the quotient   $\frac{n_g}{n_{g-1}}$, when $g$ go to infinity, is the golden ratio $\phi\approx 1.618$. Therefore this single copy is far from
being negligible. The trick is to use a level of indirection, replacing the
stack by an array of semigroups $A$ and an array of indexes $I$ pointing to the array of
semigroups. The array $I$ can be viewed as a permutation of
the array~$A$. Now instead of copying \tt{S} out of the stack, we keep
it on the stack, pushing the children in the second position by exchanging the
indexes in $I$. Here is the relevant part of the code:

\vspace{0.5em}

{\small
\begin{verbatim}
  Semigroup data[MAX_GENUS-1], *stack[MAX_GENUS], *current;
  Semigroup **stack_pointer = stack + 1;
  for (ind_t i=1; i<MAX_GENUS; i++) stack[i] = &(data[i-1]);
  [...]
  while (stack_pointer != stack) {
      --stack_pointer;
      current = *stack_pointer;
      [...] for each children {
        *stack_pointer = *(stack_pointer + 1);
        [...] construct the children in **stack_pointer
        [...] using the parent in *current
        stack_pointer++;
        [...]
      }
      *stack_pointer = current;
  }
\end{verbatim}
}

\vspace{1em}

\subsubsection*{Avoiding dynamic allocation} Compared to the \textsc{Son}
algorithm, dynamic allocation costs orders of magnitude more. Therefore, during
the derecursived stack algorithm, we only allocate (on the system stack rather
than on the heap) the stack of semigroups. No further allocations are done.

\subsubsection*{Pointer arithmetic and indexes}

Due to the way \CPP does its pointer arithmetic, even if the index in the
array are less than $3G$ and therefore fits in $8$~bits, we use $64$~bits
indexes (namely \verb|uint_fast64_t|) to avoid conversion and sign extension
when computing addresses of indexed elements. This single standard trick save
10\% of speed.

\section{Results}

Running the \Cilk version of our optimized algorithm we have explored the tree
of numerical semigroups up to genus $67$. The computations were done on a shared
$64$ core AMD Opteron\texttrademark{} Processor 6276. As other heavy
calculations were running on the machine, we only used $32$ cores. The
computations took $18$~days. The values of~$n_g$ for $g\leq 67$ are:

\begin{center}
\begin{tabular}{|r|r||r|r||r|r|}
\hline
g & $n_g$ & g & $n_g$ & g & $n_g$ \\
\hline
0 & 1 & 23 & 170\,963 & 46 & 14\,463\,633\,648\\
1 & 1 & 24 & 282\,828 & 47 & 23\,527\,845\,502\\
2 & 2 & 25 & 467\,224 & 48 & 38\,260\,496\,374\\
3 & 4 & 26 & 770\,832 & 49 & 62\,200\,036\,752\\
4 & 7 & 27 & 1\,270\,267 & 50 & 101\,090\,300\,128\\
5 & 12 & 28 & 2\,091\,030 & 51 & 164\,253\,200\,784\\
6 & 23 & 29 & 3\,437\,839 & 52 & 266\,815\,155\,103\\
7 & 39 & 30 & 5\,646\,773 & 53 & 433\,317\,458\,741\\
8 & 67 & 31 & 9\,266\,788 & 54 & 703\,569\,992\,121\\
9 & 118 & 32 & 15\,195\,070 & 55 & 1\,142\,140\,736\,859\\
10 & 204 & 33 & 24\,896\,206 & 56 & 1\,853\,737\,832\,107\\
11 & 343 & 34 & 40\,761\,087 & 57 & 3\,008\,140\,981\,820\\
12 & 592 & 35 & 66\,687\,201 & 58 & 4\,880\,606\,790\,010\\
13 & 1\,001 & 36 & 109\,032\,500 & 59 & 7\,917\,344\,087\,695\\
14 & 1\,693 & 37 & 178\,158\,289 & 60 & 12\,841\,603\,251\,351\\
15 & 2\,857 & 38 & 290\,939\,807 & 61 & 20\,825\,558\,002\,053\\
16 & 4\,806 & 39 & 474\,851\,445 & 62 & 33\,768\,763\,536\,686\\
17 & 8\,045 & 40 & 774\,614\,284 & 63 & 54\,749\,244\,915\,730\\
18 & 13\,467 & 41 & 1\,262\,992\,840 & 64 & 88\,754\,191\,073\,328\\
19 & 22\,464 & 42 & 2\,058\,356\,522 & 65 & 143\,863\,484\,925\,550\\
20 & 37\,396 & 43 & 3\,353\,191\,846 & 66 & 233\,166\,577\,125\,714\\
21 & 62\,194 & 44 & 5\,460\,401\,576 & 67 & 377\,866\,907\,506\,273\\
22 & 103\,246 & 45 & 8\,888\,486\,816 & &\\
\hline
\end{tabular}
\end{center}

As the reader can check the convergence of sequence $\frac{n_g}{n_{g-1}}$ established by A.~Zhai in \cite{Zhai} is very slow: $\frac{n_{67}}{n_{66}}\approx 1.62$.

\subsection{Wilf's conjecture}

In the paper \cite{Wilf} of 1978, H.S. Wilf conjectured that all numerical semigroup $S$ satisfy the relation 
\[
\card(\Irr(S))\geq \frac{c(S)}{c(S)-g(S)}.
\]
Since the work of M.~Bras-Amor{\'o}s, see \cite{BrasAmoros2008}, we yet know that all numerical semigroups of genus $g\leq 50$ satisfy Wilf's conjecture. With our exploration algorithm we have proved that there is no counterexample to Wilf's conjecture up to genus $60$.
We have tested the Wilf's conjecture on a different machine than the one used to determine $n_{67}$.
As its performance is lower we have only tested the conjecture for $g\leq 60$. 

\subsection{Timings}

In this section we summarize the timing improvements through the different 
optimizations of our algorithm.

The following table shows the time needed by the algorithm for computing 
the values of $n_g$ 
for $g\leq G$ with $30\leq G \leq 40$
on a machine equipped with  an \tt{Intel\texttrademark{} i5-3570K} CPU running at \texttt{3.4GHz}
and \texttt{8GB} of memory.
All algorithms are executed on only one core. Algorithm \texttt{breadth} is 
based on a breadth exploration of the tree while Algorithm \texttt{depth} 
use a depth exploration. 
These two algorithms are based on the same naive representation of numerical semigroups.
The only difference concerns the tree exploration algorithm used.
Algorithm \texttt{depth+$\delta$} is a refinement of 
\texttt{depth} based on the $S$-decomposition function. Algorithm 
\texttt{$\delta$+sse} in an optimization of \texttt{depth+$\delta$} using the 
\SIMD extension \SSE. Times are in seconds.

\vspace{1em}

\begin{center}
\begin{tabular}{|c|c|c|c|c|c|c|c|c|c|c|c|}
\hline
\texttt{Algorithm}					&  30 &  31 &  32 &  33 &  34 &   35 &  36 &  37 &  38 &  39 &  40 \\
\hline
\texttt{breadth}		& 5.0 & 8.3 &  14 &  23 &  38 & 1251 &     &     &     &     &     \\
\texttt{depth}			& 3.4 & 5.8 & 9.2 &  16 &  27 &   45 &  75 & 125 & 204 & 346 & 557 \\
\texttt{depth+$\delta$}	& 0.3 & 0.6 & 1.0 & 1.7 & 2.7 &  4.2 & 7.4 &  12 &  20 &  32 & 74  \\
\texttt{$\delta$+sse}	& 0.1 & 0.2 & 0.3 & 0.4 & 0.8 &  1.2 & 2.0 & 3.1 & 5.1 & 9.0 & 14 \\
\hline
\end{tabular}
\end{center}

\vspace{1em}

The computation of $n_g$ for $g\leq 35$ with algorithm \texttt{breadth} is very 
long because all the \texttt{8GB} of memory are consumed and the system 
must use swap memory to finish the computation. This algorithm was not launched 
for genus~$g\geq 36$.

The following table illustrates the impact of parallelization on the same 
machine based on the \tt{Intel\texttrademark{} i5-3570K} CPU which have $4$ physical cores that are able to run four threads.
  
  \begin{figure}[h!]
\begin{tabular}{|c|c|c|c|c|c|} 
\hline
Threads & 30 & 35 & 40 & 45 & 50 \\
\hline
1 &  0.11 & 1.26 & 14.9 & 182 & 2201 \\ 
2 &  0.06 & 0.65 & 7.50 &  92 & 1110\\
3 &  0.05 & 0.44 & 5.14 &  63 & 747 \\
4 &  0.04 & 0.34 & 4.02 &  48 & 489 \\
\hline
\end{tabular}
\end{figure}

The time of the one threaded algorithm  must be compared with the 
\texttt{$\delta$+sse} version of the previous table : it illustrates the 
additional cost induced by the use of \Cilk technology. 

It should be noticed that the \texttt{TurboBoost} technology \cite{WikipediaTB} 
is present on the CPU. Therefore the clock of the CPU is a bit higher when the 
number of threads 
is smaller. Therefore the gain of using more threads is a bit over than the one 
suggestested by the table.

Finally we tested our algorithm on a machine holding two 
\tt{Intel\texttrademark{} Xeon\texttrademark{} X5650} CPU running 
at \texttt{2.67GHz}.
Each of those CPU has 6 physical cores that are able to run 12 threads  thanks to the \texttt{Hyper-Threading} 
technology~\cite{WikipediaHT}: the 
machine has also 12 physical cores and  is able to run 24 threads.
However, when more 
than $12$ threads are running, the computation engines are shared between two 
threads and the speedup should be much less when
adding more cores. 
The following table resumes the time needed by the algorithm 
to explore $\mathcal{T}_{50}$ on different numbers of threads (the double 
vertical line delimite the use of \texttt{Hyper Threading}). For reference,
we put in the column called \texttt{C++} the computation time of the same
program compiled without \Cilk.
\begin{figure}[h!]
\begin{tabular}{|c|c|c|c|c|c|c||c|c|c|}
\hline
 Threads & \texttt{C++} & 1 & 2 & 4 & 8 & 12 & 16 & 20 & 24\\
 \hline
 Time (s) & 3588 & 3709 & 1865 & 932.4 & 486.8 & 325.7 & 311.2 & 302.3 & 290.2\\
 \hline
\end{tabular}
\end{figure}

Further improvements on the computation of $n_g$  or on the verification of Wilf's conjecture will be published on our home pages and on \cite{code}.
\vspace{2em}

\textbf{Acknowledgment.} We would like to thank Shalom Eliahou for introducing
us to the tree of numerical semigroups and for interesting discussions on the
subject. We are grateful to Laura Grigori who suggested us that Cilk could be
a good technology to tackle the tree-level parallelism for this kind of
problems.  We would finally like to thanks Nathan Cohen who suggested the two
levels of indirection stack optimization.

\bibliographystyle{plain} 
\bibliography{biblio}
\newpage

\noindent \textbf{Jean Fromentin}

ULCO, LMPA J.~Liouville, B.P. 699, F-62228 Calais, France

CNRS, FR 2956, France

\url{fromentin@lmpa.univ-littoral.fr}

\vspace{2em}

\noindent \textbf{Florent Hivert}

Bureau 33, Laboratoire de Recherche en Informatique (UMR CNRS 8623)

B\^atiment 650, Universit\'e Paris Sud 11, 91405 Orsay CEDEX, France

\url{florent.hivert@lri.fr}
\end{document}